\documentclass[12pt]{article}

\usepackage{amsmath,amssymb,amsfonts, amsthm, amscd} 
\usepackage{geometry}  
\usepackage{latexsym}  
\usepackage{eucal} 
\usepackage{setspace}
\def\B{\mathcal{ B}}
\def\C{\mathbb{ C}}

\def\epsilon{\varepsilon}

\newtheorem{thm} {Theorem}[section]
\newtheorem{cry}[thm]{Corollary}
\newtheorem{lemma}[thm]{Lemma}
\newtheorem{prop}[thm]{Proposition}
\numberwithin{equation}{section}

\begin{document}

\title{SOME CLOSED RANGE INTEGRAL OPERATORS ON SPACES OF ANALYTIC FUNCTIONS}
\author{Austin Anderson\thanks{The author was supported by NSF-DGE-0841223.}}
\date{March 9, 2010}
\maketitle

\noindent \textbf{Abstract:} Our main result is a characterization of $g$ for which the operator $S_g(f)(z) = \int_0^z f'(w)g(w)\, dw$ is bounded below on the Bloch space. We point out analogous results for the Hardy space $H^2$ and the Bergman spaces $A^p$ for $1 \leq p < \infty$. We also show the companion operator $T_g(f)(z) = \int_0^z f(w)g'(w) \, dw$ is never bounded below on $H^2$, Bloch, nor BMOA, but may be bounded below on $A^p$.
\vspace{0.1in}

\noindent \textbf{Keywords:} Volterra operator, Cesaro operator, integral operator, bounded below, closed range, Bloch, Hardy, Bergman, BMOA, multiplication operator

\section{Introduction}

We examine operators on Banach spaces of analytic functions on the unit disk in the complex plane. The operator $T_g,$ with symbol $g(z)$ an analytic function on the disk, is defined by
$$T_gf(z) = \int_0^z f(w) g'(w) \, dw.$$ 
$T_g$ is a generalization of the standard integral operator, which is $T_g$ when $g(z) = z$. 
Letting $g(z) = \log(1/(1-z))$ gives the Ces\'aro operator. Discussion of the operator $T_g$ first arose in connection with semigroups of composition operators. (see \cite{SZ} for background) Characterizing the boundedness and compactness of $T_g$ on certain spaces of analytic functions is of recent interest, as seen in \cite{AC}, \cite{AS}, \cite{Do} and \cite{SZ}, and open problems remain. $T_g$ and its companion operator $S_gf(z) = \int_0^z f'(w) g(w) \, dw$ are related to the multiplication operator $M_gf(z) = g(z)f(z)$, since integration by parts gives
\begin{equation}
\label{prod}
M_gf = f(0)g(0) + T_gf + S_gf.
\end{equation}

If any two of $M_g$, $S_g$, and $T_g$ are bounded, then so is the third. But in some situations one operator is bounded while two are unbounded. Boundedness of $T_g$ on the Hardy and Bergman spaces and BMOA is characterized in \cite{AC}, \cite{AS} and \cite{SZ}. The pointwise multipliers of these and many other spaces are well known. See \cite{St} for BMOA.

In this paper we examine the property of being bounded below for $T_g$ and $S_g$ on spaces of analytic functions. We examine aspects of the problems on Hardy and Bergman spaces, the Bloch space, and BMOA. In doing so we must assume the operators are bounded, and we study characterizations of the symbols for which the operators are bounded. Consideration of $M_g$ is useful as well.

\section{Preliminaries}

The notation $f \lesssim g$ will mean there exists a universal constant $C$ such that $f \leq Cg$. $f \approx g$ will mean $f \lesssim g \lesssim f$.

Let $D$ be the unit disk in the complex plane. Let $H(D)$ denote the set of analytic functions on $D$. For $1 \leq p < \infty$, the Hardy space $H^p$ on $D$ is
$$ \left\{f \in H(D) : \| f \|_p = \sup_{0<r<1} \int_0^{2\pi} |f(re^{it})|^p dt < \infty \right\}.$$
The space of bounded analytic functions on $D$ is
$$ H^\infty = \{f \in H(D) : \| f \|_\infty = \sup_{z \in D} |f(z)| < \infty\}.$$
We define weighted Bergman spaces, for $\alpha > -1$,
$$ A^p_\alpha = \left\{f \in H(D) : \| f \|_{A^p_\alpha} = \int_D |f(z)|^p(1-|z|^2)^\alpha dA(z) < \infty \right\},$$
where $dA(z)$ refers to Lebesgue area measure on $D$.

The Bloch space is
$$ \B = \left\{f \in H(D) : \| f \|_\B = \sup_{z \in D} |f'(z)|(1-|z|^2) < \infty\right\}.$$
Note that $\| \|_\B$ is a semi-norm. The true norm accounts for functions differing by an additive constant.

A complex measure $\mu$ on $D$ is called a (Hardy space) Carleson measure if there exists $C > 0$ such that $\mu(S(I)) \leq C|I|$ for all arcs $I \subseteq \partial D$, where $S(I) = \{re^{i\theta} : 1-|I| < r < 1, e^{i\theta} \in I\}$ is the Carleson rectangle associated with $I$, and $|I|$ is the length of $I$. The smallest such $C$ is called the Carleson constant for the measure $\mu$. Define, for $f \in H(D)$, $d\mu_f(z) = |f'(z)|^2(1-|z|^2) dA(z)$. The space of analytic functions of bounded mean oscillation, BMOA, is the set of $f$ for which $\mu_f$ is Carleson. The BMOA norm $\|f\|_*$ is comparable to the square root of the Carleson constant for $\mu_f$. The space of analytic functions of vanishing mean oscillation, VMOA, is the set of $f$ for which
$$ \lim_{|I| \to 0} \frac{\mu_f(S(I))}{|I|} = 0.$$
Zhu \cite{Z} is a good reference for background on all these spaces.

$H^\infty$ is a subspace of BMOA, which in turn is a subspace of $H^2$. $H^\infty$ is also a subspace of $\B$. The next lemma will be useful later when studying $T_g$.

\begin{lemma}
\label{zn}
Let $f_n(z) = z^n, n = 1, 2, \dots$. $\|f_n\|_X \approx 1$ for all $n$ and $X = H^2, \B, BMOA$.
\end{lemma}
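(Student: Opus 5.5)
The plan is to treat the three spaces separately, obtaining an upper bound and a lower bound in each case.

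\emph{Hardy space.} For $H^2$ the claim is immediate from the definition: $\|z^n\|_2 = 2\pi$ for every $n$ (it equals $1$ after normalizing $dt$).

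\emph{Bloch space.} We have $\|z^n\|_\B = \sup_{0\le r<1} n r^{n-1}(1-r^2)$. For the upper bound, note $1-r^2 \le 2(1-r)$. Maximizing $n r^{n-1}(1-r)$ in $r$ gives the maximum at $r=(n-1)/n$, where the value is $(1-1/n)^{n-1} \le 1$. For the lower bound, evaluate at $r = 1-1/n$, which gives
$$n\left(1-\tfrac1n\right)^{n-1}\cdot\tfrac1n\left(2-\tfrac1n\right) \ge \left(1-\tfrac1n\right)^{n-1} \ge e^{-1}$$
for $n\ge2$. The case $n=1$ gives the value $1$ directly. Hence $\|z^n\|_\B\approx 1$.

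\emph{BMOA.} For the lower bound we use that $H^2$ norms are dominated by BMOA norms. We have $\|f\|_2 \lesssim |f(0)| + \|f\|_*$, since $H^\infty\subset$ BMOA $\subset H^2$ continuously. With $f(0)=0$ this gives $\|z^n\|_* \gtrsim \|z^n\|_2 \approx 1$. Alternatively, we can test the Carleson condition on the whole circle, $I=\partial D$, where $S(I)=D$:
$$\mu_{f_n}(D)=\int_D n^2|z|^{2n-2}(1-|z|^2)\,dA = 2\pi n^2\int_0^1 r^{2n-1}(1-r^2)\,dr = \frac{\pi n^2}{n(n+1)} \ge \frac{\pi}{2},$$
while $|I|=2\pi$. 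So the Carleson constant is bounded below uniformly in $n$.

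For the upper bound, the BMOA seminorm is dominated by the $H^\infty$ norm: $\|f\|_*\lesssim\|f\|_\infty$, by the continuous inclusion $H^\infty\subset$ BMOA (the Garsia norm is $\le 2\|f\|_\infty$). Since $\|z^n\|_\infty=1$, the upper bound follows.

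The main point requiring care is making sure the constants in these inclusions are uniform. They are, because the inclusions are fixed bounded embeddings independent of $n$. If one prefers a self-contained argument, the Carleson estimate $\mu_{f_n}(S(I))\lesssim|I|$ can be checked directly. Split $S(I)$ into the part with $|z|\le 1-1/n$, where $|f_n'|$ is exponentially small in $n(1-|z|)$, and the remaining annular strip. On the strip, $\int |f_n'|^2(1-|z|^2)$ over the radial extent $\min(|I|,1/n)$ times the arc $|I|$ gives
$$\lesssim |I|\cdot n^2\int_{1-1/n}^1(1-r)\,dr\approx |I|.$$
I expect the direct route to be unnecessary given the embedding.
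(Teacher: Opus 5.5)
Your proposal is correct and takes essentially the same route as the paper: direct computation for $H^2$ and for $\B$ (the extremum of $nr^{n-1}(1-r)$ is at $r=1-1/n$), and for BMOA a sandwich between the $H^2$ and $H^\infty$ norms. Your chain $\|z^n\|_2 \lesssim \|z^n\|_* \lesssim \|z^n\|_\infty$ runs in the direction actually given by the embeddings $H^\infty \subset$ BMOA $\subset H^2$, whereas the paper's proof writes these inequalities reversed.
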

\begin{proof}
It is well-known that $\|f_n\|_{H^2} = 1$ for all $n$. Checking the Bloch norm with a calculation, we get $\|f_n\|_\B \approx \sup_{0<r<1} nr^{n-1}(1-r) = (1 - 1/n)^{n-1} \to 1/e$ as $n \to \infty$. Finally, $1 = \|f_n\|_\infty \lesssim \|f_n\|_{BMOA} \lesssim \|f_n\|_{H^2}$. (see \cite{Z})
\end{proof}

When studying $T_g$ and $S_g$, it is useful to be able to compare the norm of a function to the norm of its derivative. For $p \geq 1$, $\alpha > -1$, the differentiation operator and its inverse, the indefinite integral, are isometries between $A^p_{\alpha}/\C$ and $A^p_{\alpha+p}$, i.e.,
\begin{equation}
\label{isom}
\| f \|_{A^p_\alpha} \approx |f(0)| + \| f' \|_{A^p_{\alpha+p}}.
\end{equation}
(see \cite[Theorem 4.28]{Z}) Making the natural definition $A^2_{-1} = H^2$, the identity holds for $p=2, \alpha=-1$ as well. This is the well-known Littlewood-Paley identity, $\|f\|_{H^2} \approx |f(0)| + \int_D |f'(z)|^2(1-|z|^2)dA(z)$.

On all the spaces mentioned, point evaluation is a bounded linear functional. The norm of point evaluation at $z$ in $A^p_\alpha$ is comparable to $1/(1-|z|)^{(2+\alpha)/p}$. (see \cite[Theorem 4.14]{Z}) In $\B$ and BMOA, the norm of point evaluation is comparable to $\log(2/(1-|z|))$. The following theorem is a generalization of a result on multipliers of Banach spaces in which point evaluation is bounded. See, for example, \cite[Lemma 11]{DRS}.

\begin{thm}
\label{nec}
Let $X, Y$ be Banach spaces of analytic functions, and let $\lambda^0_z$ and $\lambda^1_z$ be linear functionals on $X$ and $Y$ defined by $\lambda^0_z f = f(z)$ and $\lambda^1_z f = f'(z)$. Suppose $\lambda^0_z$ and $\lambda^1_z$ are bounded.
\begin{itemize}
\item[(a)] Suppose $S_g$ maps $X$ boundedly into $Y$. Then 
$$|g(z)| \leq \, \|  S_g  \| \, \frac{\|  \lambda^1_z  \|_Y }{\|  \lambda^1_z  \|_X}.$$
\item[(b)] Suppose $T_g$ maps $X$ boundedly into $Y$. Then 
$$|g'(z)| \leq \, \|  T_g  \| \, \frac{\|  \lambda^1_z  \|_Y }{\|  \lambda^0_z  \|_X}.$$
\end{itemize}
\end{thm}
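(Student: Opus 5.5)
The plan is a duality argument: compose point-evaluation functionals with the operator and compare norms. The key observation is that differentiating the integral formulas gives pointwise identities
$$(S_gf)'(z) = f'(z)g(z), \qquad (T_gf)'(z) = f(z)g'(z).$$
In functional notation these read
$$\lambda^1_z \circ S_g = g(z)\,\lambda^1_z \quad\text{on } X, \qquad \lambda^1_z\circ T_g = g'(z)\,\lambda^0_z \quad\text{on } X,$$
where on the left $\lambda^1_z$ is the functional on $Y$ and on the right the functional on $X$.

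For (a), fix $z$ and take any $f\in X$. By the identity above and boundedness of $\lambda^1_z$ on $Y$,
$$|g(z)|\,|f'(z)| = |(S_gf)'(z)| \le \|\lambda^1_z\|_Y\,\|S_gf\|_Y \le \|\lambda^1_z\|_Y\,\|S_g\|\,\|f\|_X.$$
Taking the supremum over $f$ with $\|f\|_X\le 1$, the left side becomes $|g(z)|\,\|\lambda^1_z\|_X$, since $\|\lambda^1_z\|_X = \sup_{\|f\|_X\le1}|f'(z)|$ by definition of the functional norm. Dividing by $\|\lambda^1_z\|_X$ gives the claim. Part (b) is identical: replace $f'(z)g(z)$ by $f(z)g'(z)$, bound $|g'(z)||f(z)|\le \|\lambda^1_z\|_Y\|T_g\|\|f\|_X$, and take the supremum to produce $\|\lambda^0_z\|_X$.

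The only point needing care is the division, which requires $\|\lambda^1_z\|_X\neq 0$ (resp.\ $\|\lambda^0_z\|_X\ne0$). This holds in all spaces of interest, since $z\mapsto z$ (resp.\ $1$) lies in them. If the functional were zero, the inequality would be read as vacuous or the hypothesis understood to exclude that case, and I would note this explicitly. A second point is the Bloch case, where $\|\cdot\|_\B$ is a seminorm. This causes no difficulty, since the argument only uses the bounds $|\lambda f|\le\|\lambda\|\|f\|$ and $\|S_gf\|\le\|S_g\|\|f\|$, which hold equally for seminorms. Beyond these remarks there is no real obstacle; the proof is just the chain of inequalities above.
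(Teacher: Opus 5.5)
Your proof is correct and is essentially the paper's argument: you apply $\lambda^1_z$ on $Y$ to $S_gf$ (resp.\ $T_gf$), bound the result by $\|\lambda^1_z\|_Y\|S_g\|\|f\|_X$ (resp.\ with $\|T_g\|$), and take the supremum over the unit ball of $X$ to produce $\|\lambda^1_z\|_X$ (resp.\ $\|\lambda^0_z\|_X$). Your remark that the functional on $X$ must be nonzero before dividing is a harmless clarification that the paper leaves implicit.
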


\begin{proof}
Note that, for $f \in X$,
$$ |f'(z)||g(z)| = |\lambda^1_z S_g(f)| \leq \| \lambda^1_z \|_Y \| S_g \| \| f \|_X $$
Since $\sup_{\|f\|_X = 1} |f'(z)| = \|\lambda^1_z\|_X$, taking the sup over $\|f\|_X = 1$ of both sides gives us
$$ \|\lambda^1_z\|_X |g(z)| \leq \| S_g \| \|\lambda^1_z\|_Y. $$
Hence a). Similarly,
$$ |f(z)||g'(z)| = |\lambda^1_z T_g(f)| \leq \| \lambda^1_z \|_Y \| T_g \| \| f \|_X. $$
Taking the sup over $f$ with norm $1$, we get
$$ \|\lambda^0_z\|_X |g'(z)| \leq \| T_g \| \|\lambda^1_z\|_Y. $$
This completes the proof.
\end{proof}

\begin{cry}
\label{cor_bdd}
If $X$ is a Banach space of analytic functions on which point evaluation of the derivative is a bounded linear functional, and $S_g$ is bounded on $X$, then $g$ is bounded.
\end{cry}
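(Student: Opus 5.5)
This follows directly from Theorem~\ref{nec}(a) with $X = Y$. Then the ratio $\|\lambda^1_z\|_Y / \|\lambda^1_z\|_X$ is identically $1$, and part (a) gives
$$|g(z)| \leq \|S_g\|$$
for every $z \in D$. Hence $\sup_{z\in D}|g(z)| \leq \|S_g\|$, so $g \in H^\infty$ with $\|g\|_\infty \leq \|S_g\|$.

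The only point needing care is that the hypotheses of Theorem~\ref{nec} must hold. The proof of part (a) uses only that $\lambda^1_z$ is bounded on both $X$ and $Y$; boundedness of $\lambda^0_z$ is never invoked there. So the corollary's hypothesis that derivative point evaluation is bounded on $X$ is exactly what is required.

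We also need $\|\lambda^1_z\|_X \neq 0$, since the argument divides by it. This holds as long as some $f \in X$ has $f'(z) \neq 0$, which is true for any nontrivial space of analytic functions of the kind considered, for instance one containing the polynomials or at least one nonconstant function. At a point $z$ where $\|\lambda^1_z\|_X$ did vanish, the inequality $\|\lambda^1_z\|_X|g(z)| \leq \|S_g\|\,\|\lambda^1_z\|_X$ would carry no information. I would note this implicit nondegeneracy assumption in a single sentence; otherwise the proof is a one-line application of Theorem~\ref{nec}(a).
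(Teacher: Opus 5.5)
Your proof is correct and is exactly the paper's intended argument: the paper states Corollary \ref{cor_bdd} as an immediate consequence of Theorem \ref{nec}(a) with $X=Y$, which gives $|g(z)|\le\|S_g\|$. On your nondegeneracy remark, one nonconstant $f_0\in X$ does not by itself make $\lambda^1_z\neq 0$ at every $z$. It does guarantee that the points where $\lambda^1_z$ vanishes on $X$ lie in the discrete zero set of $f_0'$, so the bound holds on a dense set and extends to all of $D$ by continuity of $g$.
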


In \cite{CM}, we see a similar result for $M_g$, i.e., boundedness of $M_g$ on a Banach space in which point evaluation is bounded implies $g$ is bounded. On the Hardy and Bergman spaces on the disk, both $M_g$ and $S_g$ are bounded if and only if $g$ is bounded. That this is necessary for $S_g$ is Corollary \ref{cor_bdd}. That it is sufficient follows from integration by parts since $T_g$ and $M_g$ are bounded if $g$ is bounded. (see \cite{AC} and \cite{AS} concerning $T_g$) A similar situation holds for $\B$.

\begin{prop}
\label{sgbloch}
$S_g$ is bounded on $\B$ if and only if $g \in H^\infty$.
\end{prop}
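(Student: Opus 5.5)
Both directions are short, and the only care needed is with the Bloch seminorm.

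For sufficiency, the derivative of $S_g f$ is just $f'g$, so I will estimate the Bloch seminorm directly. If $g \in H^\infty$ and $f \in \B$, then $(S_gf)'(z) = f'(z)g(z)$, and
$$ |(S_gf)'(z)|(1-|z|^2) \leq \|g\|_\infty |f'(z)|(1-|z|^2) \leq \|g\|_\infty \|f\|_\B .$$
Taking the supremum over $z$ gives $\|S_gf\|_\B \leq \|g\|_\infty \|f\|_\B$. Since $S_gf(0)=0$, this bound also controls the full norm $|h(0)| + \|h\|_\B$. So $S_g$ is bounded with $\|S_g\| \leq \|g\|_\infty$. Unlike the Hardy and Bergman cases, I will not need boundedness of $T_g$ or $M_g$ here.

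For necessity, I will apply Corollary \ref{cor_bdd} with $X = \B$, or equivalently Theorem \ref{nec}(a) with $X=Y=\B$. That result gives $|g(z)| \leq \|S_g\|$ for every $z$, so $g\in H^\infty$. Its hypotheses are that point evaluation $\lambda^0_z$ and derivative evaluation $\lambda^1_z$ are bounded on $\B$ with the true norm. The paper already notes that $\|\lambda^0_z\|\approx \log(2/(1-|z|))$. For $\lambda^1_z$, the definition of the seminorm gives $|f'(z)| \leq \|f\|_\B/(1-|z|^2)$. The reverse bound $\|\lambda^1_z\|_\B \gtrsim 1/(1-|z|^2)$ is not even needed, because the ratio $\|\lambda^1_z\|_Y/\|\lambda^1_z\|_X$ equals $1$ when $X=Y$.

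The only point needing care is that $\B$ with the seminorm is not a Banach space, so I will work throughout with the true norm $|f(0)|+\|f\|_\B$. Since $S_g$ kills constants and $(S_gf)'$ depends only on $f'$, boundedness in either norm is equivalent, and the argument of Theorem \ref{nec} goes through unchanged.
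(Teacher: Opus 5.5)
Your proposal is correct and follows the paper's proof: sufficiency is the same one-line estimate $\|S_gf\|_\B \le \|g\|_\infty\|f\|_\B$, and necessity is Corollary~\ref{cor_bdd} (that is, Theorem~\ref{nec}(a) with $X=Y=\B$, giving $|g(z)|\le\|S_g\|$). Your remarks about working with the true norm $|f(0)|+\|f\|_\B$, and about the equivalence of boundedness in the seminorm and in the norm, are valid and make explicit a point the paper leaves implicit.
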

\begin{proof}
It is clear $g \in H^\infty$ implies $S_g$ is bounded, since
$$ \|S_gf\|_\B = \sup_{z \in D} (|f'(z)||g(z)|(1-|z|^2)) \leq \|g\|_{H^\infty} \|f\|_\B $$
The converse follows from Corollary \ref{cor_bdd}.
\end{proof}

\section{The property of being bounded below}

An operator $T$ is said to be bounded below if there exists $C > 0$ such that $\| Tf \| \geq C \| f \|$ for all $f$.

It typically is the case for one-to-one operators on Banach spaces that boundedness below is equivalent to having closed range. The analogue of Theorem \ref{closed_range} for composition operators is found in Cowen and MacCluer \cite{CM}. We include the proof for $T_g$ and $S_g$, essentially the same, for easy reference.

\begin{lemma}
\label{onetoone}
$T_g$ is one-to-one for nonconstant $g$.
\end{lemma}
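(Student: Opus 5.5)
The plan is to use linearity and reduce to showing that $T_g f = 0$ forces $f = 0$. Since $T_g$ is linear, it suffices to check that its kernel is trivial.

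Suppose $T_gf \equiv 0$ on $D$. Differentiating the defining integral gives
$$(T_gf)'(z) = f(z)g'(z) = 0 \quad \text{for all } z \in D.$$
Because $g$ is nonconstant and analytic on the connected open set $D$, the derivative $g'$ is not identically zero. By the identity theorem, its zero set $Z(g')$ is therefore discrete in $D$, so the set $D \setminus Z(g')$ is open and nonempty. On that set we have $f = 0$. Applying the identity theorem again, this time to $f$, shows that $f \equiv 0$ on $D$.

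The only point needing any care is the step from $fg' \equiv 0$ to $f \equiv 0$. This amounts to the fact that $H(D)$ is an integral domain, which follows from the identity theorem as above. I would also note that the argument is purely algebraic and analytic, so it applies on every space considered in the paper ($H^2$, $A^p_\alpha$, $\B$, BMOA) without using any norm.

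For $\B$, functions are identified modulo constants, because $\|\cdot\|_\B$ is only a seminorm. In that setting the relevant statement is that $T_gf$ constant implies $f = 0$. Since $T_gf(0) = 0$, a constant $T_gf$ must be identically zero, and the same argument applies. By contrast, the analogous claim for $S_g$ would require $g \not\equiv 0$ and would give only $f' = 0$, i.e. $f$ constant, which is exactly the quotient situation.
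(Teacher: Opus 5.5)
Your proof is correct and is essentially the paper's argument. You differentiate to get $f g' \equiv 0$ (the paper writes $f_1 g' = f_2 g'$ instead of working with the kernel, which is equivalent by linearity), use that the zeros of $g'$ are isolated, and conclude by the identity theorem; you also correctly take the exceptional set to be the zero set of $g'$, where the paper's text slips and says ``where $g$ vanishes.''
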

\begin{proof}
If $T_gf_1 = T_gf_2$, taking derivatives gives $f_1(z)g'(z) = f_2(z)g'(z)$. Thus $f_1(z) = f_2(z)$ except possibly at the (isolated) points where $g$ vanishes. Since $f_1$ and $f_2$ are analytic, $f_1 = f_2$.
\end{proof}

When considering the property of being bounded below for $S_g$, we note that $S_g$ maps any constant function to the $0$ function. Thus, it is only useful to consider spaces of analytic functions modulo the constants.

\begin{thm}
\label{closed_range}
Let $Y$ be a Banach space of analytic functions on the disk. For nonconstant $g$, $T_g$ is bounded below on $Y$ if and only if it has closed range. $S_g$ is bounded below on $Y / \C$ if and only if it has closed range on $Y / \C$.
\end{thm}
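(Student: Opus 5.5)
The argument is the standard one for injective bounded operators between Banach spaces, and we run it once for $T_g$ and once for $S_g$. Throughout, $T_g$ (resp. $S_g$) is assumed bounded on $Y$, as the paper assumes everywhere.

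\emph{Bounded below implies closed range.} Suppose $\|T_g f\| \geq C\|f\|$ for all $f\in Y$, and let $T_g f_n \to h$ in $Y$.
\begin{itemize}
\item Then $(T_g f_n)$ is Cauchy. The lower bound gives $\|f_n - f_m\| \leq C^{-1}\|T_g f_n - T_g f_m\|$, so $(f_n)$ is Cauchy in $Y$.
\item Since $Y$ is complete, $f_n \to f$ for some $f\in Y$.
\item By boundedness of $T_g$, $T_g f_n \to T_g f$, so $h = T_g f$ lies in the range.
\end{itemize}
This direction does not use that $g$ is nonconstant.

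\emph{Closed range implies bounded below.} Assume $g$ is nonconstant and the range $R = T_g(Y)$ is closed.
\begin{itemize}
\item As a closed subspace of a Banach space, $R$ is itself a Banach space.
\item By Lemma \ref{onetoone}, $T_g: Y \to R$ is a bounded linear bijection.
\item The open mapping (bounded inverse) theorem gives that $T_g^{-1}: R \to Y$ is bounded, say $\|T_g^{-1}\| \le M$.
\item Hence $\|f\| = \|T_g^{-1}T_g f\| \leq M\|T_g f\|$, which is the lower bound with constant $1/M$.
\end{itemize}

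\emph{The case of $S_g$ on $Y/\C$.} The same argument applies once two points are settled.
\begin{itemize}
\item \emph{$S_g$ is well defined on the quotient.} $S_g$ annihilates constants, so it induces a bounded map on $Y/\C$. The quotient $Y/\C$ is a Banach space because $\C$ is a closed (finite-dimensional) subspace of $Y$.
\item \emph{$S_g$ is injective on $Y/\C$.} Suppose $S_g f_1 = S_g f_2$ with $g \not\equiv 0$. Differentiating gives $(f_1' - f_2')\,g = 0$. Since the zeros of $g$ are isolated, $f_1' = f_2'$, so $f_1 - f_2$ is constant. Hence $S_g$ is injective on classes.
\end{itemize}
With these in hand, both directions carry over verbatim, with $\|\cdot\|$ replaced by the quotient norm. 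The range of the induced map $Y/\C \to Y$ is the same set as $S_g(Y)$. If one instead reads $S_g$ as a map $Y/\C \to Y/\C$, the argument is identical with the target Banach space $Y/\C$.

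\emph{Where care is needed.} There is no genuine obstacle. The main point requiring care is the injectivity of $S_g$ on the quotient, which requires $g \not\equiv 0$. The statement says "nonconstant $g$", which covers this; the case $g$ a nonzero constant is also fine, since then $S_g f = g\,(f - f(0))$. The other point is that completeness of $Y$ and $Y/\C$, together with boundedness of the operator, is what allows the open mapping theorem to be invoked.
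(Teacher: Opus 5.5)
Your proof is correct and follows the paper's argument: the Cauchy-sequence argument for one direction, and for the converse, injectivity (Lemma \ref{onetoone}) plus inversion on the closed range, which is a Banach space. The only difference is that you invoke the bounded inverse theorem where the paper checks a closed graph, which amounts to the same thing. Your explicit check that $S_g$ is well defined and injective on $Y/\C$ (which needs $g \not\equiv 0$) fills in what the paper leaves as ``the same argument holds.''
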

\begin{proof}
Assume $T_g$ is bounded below, i.e., there exists $\varepsilon > 0$ such that $\|T_gf\| \geq \varepsilon \|f\|$ for all $f$. Suppose $\{T_g f_n\}$ is a Cauchy sequence in the range of $T_g$. Since $\|f_n - f_m\| \lesssim \|T_g f_n - T_g f_m\|$, $\{f_n\}$ is also a Cauchy sequence. Letting $f = \lim f_n$, we have $T_g f_n \to T_g f$, showing $T_g f_n$ converges in the range of $T_g$. Hence the range is closed.

Conversely, assume $T_g : Y \to Y$ is closed range. Let $\{f_n\}$ be a sequence in $Y$ such that $\|T_g f_n\| \to 0$. $T_g$ is one-to-one by Lemma \ref{onetoone}. Let the closed range of $T_g$ be $X$. $X$ is a Banach space, and we can define the inverse $T_g^{-1} : X \to Y$. Suppose $\{x_n\}$ converges to $x = T_g h$ in $X$, and $T_g^{-1} x_n$ converges to $y$ in $Y$. Applying $T_g$ to $\{T_g^{-1} x_n\}$, this means $x_n$ converges to $T_g y$. Hence $T_g y = T_g h$. Since $T_g$ is one-to-one, $y = h$, and $x = T_g^{-1} y$. By the Closed Graph Theorem, $T_g^{-1}$ is continuous. Thus, $\|f_n\| = \|T_g^{-1}(T_g f_n)\| \to 0$, implying $T_g$ is bounded below.

The same argument holds for $S_g$ as well, but only on spaces modulo constants, since $S_g$ is not one-to-one otherwise.
\end{proof}

We will show that $T_g$ is never bounded below on $H^2$, $\B$, nor BMOA. The sequence $\{z^n\}$ demonstrates the result in each space, since the functions $z^n$ have norm comparable to $1$, independent of $n$. (Lemma \ref{zn})

\begin{thm} 
\label{tgnot}
$T_g$ is never bounded below on $H^2$, $\B$, nor $BMOA$.
\end{thm}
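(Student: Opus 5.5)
We argue by contradiction, testing $T_g$ on the sequence $f_n(z)=z^n$, which by Lemma \ref{zn} satisfies $\|f_n\|_X\approx 1$ for $X=H^2,\B,BMOA$. We may assume $T_g$ is bounded on $X$ and $g$ is nonconstant; otherwise $T_g$ is either not under consideration or identically zero. The aim is to show $\|T_g f_n\|_X\to 0$. This contradicts any inequality $\|T_gf\|\ge C\|f\|$, since $\|f_n\|_X$ stays bounded away from zero.

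\textbf{The case $X=\B$.} Since $T_g$ is bounded on $\B$, Theorem \ref{nec}(b) applies with $\|\lambda^0_z\|_\B\approx\log(2/(1-|z|))$ and $\|\lambda^1_z\|_\B\approx 1/(1-|z|^2)$. It gives
$$|g'(z)|(1-|z|^2)\lesssim \frac{1}{\log(2/(1-|z|))},$$
so $g$ is in fact in the little Bloch space. Then
$$\|T_gf_n\|_\B=\sup_z |z|^n|g'(z)|(1-|z|^2).$$
We split the disk at $|z|=r$. On $|z|\le r$ this quantity is at most $r^n\|g\|_\B$. On $|z|>r$ it is at most $\sup_{|z|>r}|g'(z)|(1-|z|^2)$, which is small once $r$ is close to $1$. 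Choosing $r$ first and then $n$ large gives $\|T_gf_n\|_\B\to0$.

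\textbf{The case $X=H^2$.} By the Littlewood--Paley identity \eqref{isom},
$$\|T_gf_n\|_{H^2}^2\approx\int_D |z|^{2n}|g'(z)|^2(1-|z|^2)\,dA(z).$$
Boundedness of $T_g$ on $H^2$, applied to $f=1$, gives $T_g1=g-g(0)\in H^2$. Hence $|g'|^2(1-|z|^2)\,dA$ is a finite measure. The integrand $|z|^{2n}$ tends to $0$ pointwise and is dominated by $1$, so dominated convergence gives $\|T_gf_n\|_{H^2}\to0$.

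\textbf{The case $X=BMOA$.} This is the main obstacle. Boundedness of $T_g$ on BMOA does not by itself give $g\in VMOA$, so we cannot simply argue as in the $\B$ case. We need to show that the Carleson constant of $|z|^{2n}\,d\mu_g$ tends to $0$. For this we would invoke the known characterization (\cite{SZ}) that $T_g$ is bounded on BMOA exactly when $|g'(z)|^2(1-|z|^2)\log^2\frac{2}{1-|z|}\,dA$ is a Carleson measure, with constant $M$. With $\mu=\mu_g$, we handle arcs by size.

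\emph{Short arcs.} For $|I|=h$ small, the factor $\log^2(2/h)$ on $S(I)$ gives $\mu(S(I))\lesssim M|I|/\log^2(2/h)$. This is small uniformly in $n$ once $h<\delta$.

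\emph{Long arcs.} For $|I|\ge\delta$, split $S(I)$ at $|z|=r$. The part with $|z|>r$ is covered by about $|I|/(1-r)$ boxes of size $1-r$, each contributing at most $\lesssim M(1-r)/\log^2(2/(1-r))$, so the total is small relative to $|I|$. The part with $|z|\le r$ has weight at most $r^{2n}\|g\|_{H^2}^2\lesssim r^{2n}\delta^{-1}|I|$.

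Choosing $\delta$, then $r$, then $n$ gives the conclusion. If we prefer not to cite \cite{SZ}, the fallback is to use Theorem \ref{nec}(b) together with the Bloch argument. However, that only controls pointwise behavior, so the characterization seems necessary here.
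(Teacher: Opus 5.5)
Your proof is correct in substance and follows the paper's route. You use the same test functions $z^n$, dominated convergence for $H^2$, and Theorem~\ref{nec}(b) with a split at $|z|=r$ for $\B$. For BMOA you decompose long arcs into boundary boxes, where $\mu_g$ is small relative to arc length, plus an interior region where $|z|^{2n}$ is small. For $H^2$ you are more economical than the paper: $g-g(0)=T_g1\in H^2$ already makes $\mu_g$ a finite measure, which is all dominated convergence needs. So the Aleman--Siskakis theorem ($g\in BMOA$) is not required there.

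The BMOA citation, however, is misstated. Siskakis and Zhao characterize boundedness of $T_g$ on BMOA by the box condition $\sup_I \frac{\log^2(2/|I|)}{|I|}\,\mu_g(S(I))<\infty$. Your condition, that $\log^2\frac{2}{1-|z|}\,d\mu_g(z)$ be a Carleson measure, is strictly stronger. So the direction you invoke, bounded implies that measure is Carleson, is false as stated. For example, take the lacunary series $g(z)=\sum_k k^{-3/2}z^{2^k}$. The box condition holds, since $\mu_g(S(I))\lesssim |I|/m^2$ when $|I|\approx 2^{-m}$, yet $\int_D \log^2\frac{2}{1-|z|}\,d\mu_g=\infty$.

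This does no damage to your argument. The only consequence you use is $\mu_g(S(J))\lesssim M|J|/\log^2(2/|J|)$, which is exactly the correct condition. That condition also gives $\mu_g(S(J))/|J|\to 0$ as $|J|\to 0$, i.e.\ $g\in VMOA$. This contradicts your remark that boundedness of $T_g$ does not yield VMOA; it does, and the paper cites exactly this implication. Your short-arc and long-arc estimates use nothing beyond $g\in VMOA$. With VMOA in place of the logarithmic rate, they are precisely the paper's argument.
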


\begin{proof}
Let $f_n(z) = z^n$. For $H^2$,
$$\lim_{n\rightarrow \infty} \|  T_g f_n  \|_2^2 \,  \approx 
\lim_{n\rightarrow \infty} \int_D |z^n|^2 |g'(z)|^2 (1-|z|^2) \, dA(z).$$
We assume $T_g$ is bounded, so $g \in BMOA$ by a result of Aleman and Siskakis. \cite{AS}
Thus $\mu_g$ is a Carleson measure, allowing us to bring the limit inside the integral by the Dominated Convergence Theorem.
$$\lim_{n \to \infty} \|  T_g f_n  \|_2^2 
=  \int_D \lim_{n\rightarrow \infty} |z^n|^2 |g'(z)|^2 (1-|z|^2) \, dA(z)
 =  0.$$
Since $\|  f_n  \|_2 = 1$ for all $n$, $T_g$ is not bounded below.

If $T_g$ is bounded on $\B$, then, by Theorem \ref{nec}, $|g'(z)| (1-|z|) = O(1/\log(1/(1-|z|)))$ as $|z| \to 1$.
$$ \|  T_g f_n  \|_{\B} = \sup_{z \in D} |z^n||g'(z)| (1-|z|) \lesssim \sup_{0\leq r<1}r^n \frac{1}{\log(2/(1-r))}. $$
Given $\varepsilon > 0$, there exists $\delta < 1 $ such that $1/\log(2/(1-r)) < \varepsilon$ for $\delta < r < 1$. For large $n$, $r^n < \varepsilon$ for $0 < r < \delta$. Thus, $\lim_{n \to \infty} \| T_g f_n \|_{\B} = 0$, and Lemma \ref{zn} implies $T_g$ is not bounded below on $\B$.

On BMOA, Siskakis and Zhao proved $T_g$ being bounded implies $g \in VMOA$. \cite{SZ}
$$ \lim_{n \to \infty} \|T_gf_n \|_*^2 \approx \lim_{n \to \infty} \sup_{I} \frac{1}{|I|} \int_{S(I)} |z^n|^2|g'(z)|^2(1-|z|^2) dA(z). $$
Let $I$ be an arc in $\partial D$, and let $\varepsilon > 0$. Since $g \in VMOA$, there exists $\delta > 0$ such that 
$$ \frac{1}{|J|} \int_{S(J)} |g'(z)|^2(1-|z|^2) dA(z) < \varepsilon \text{ whenever } |J| < \delta. $$
If $|I| > \delta$, divide $I$ into $K$ disjoint intervals of length approximately $\delta$, so $I = \cup_{i=1}^K J_i, \delta/2 < |J_i| < \delta$ for all $i$, and $\delta K \approx |I|$.
Let $S_\delta(I) = S(I) - \cup_i S(J_i)$. For large $n$, $(1-\delta/2)^{2n} \leq \varepsilon|I|$, and to estimate the integral over $S_\delta(I)$ we use the fact that $\mu_g$ is a Carleson measure.
$$ \frac{1}{|I|} \int_{S(I)} |z^n|^2|g'(z)|^2(1-|z|^2) dA(z) = \frac{1}{|I|} \int_{S_\delta(I)} |z^n|^2|g'(z)|^2(1-|z|^2) dA(z) $$
$$ + \frac{1}{|I|} \sum_{i=1}^K \int_{S(J_i)} |z^n|^2|g'(z)|^2(1-|z|^2) dA(z) $$
$$ \leq \frac{1}{|I|} (1-\delta/2)^{2n} C\|g\|_*^2 + \frac{1}{|I|} K\delta\varepsilon \lesssim \varepsilon $$
for large $n$. Hence $\lim_{n \to \infty} \|T_gf_n\|_* = 0$ and $T_g$ is not bounded below on BMOA.
\end{proof}

In contrast to Theorem \ref{tgnot}, $T_g$ can be bounded below on weighted Bergman spaces. We state the result here, but the key is Proposition \ref{luk}, proved afterward.

\begin{thm} 
\label{tgap}
Let $1 \leq p < \infty$, $\alpha > -1$. $T_g$ is bounded below on $A_{\alpha}^p$ if and only if there exist $c > 0$ and $\delta > 0$ such that
$$ |  \{z \in D : |g'(z)|(1-|z|^2) > c \} \cap S(I) | > \delta |I|^2.  $$
\end{thm}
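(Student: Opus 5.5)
Using \eqref{isom}, I will reduce the statement to a reverse Carleson (sampling) inequality. Since $(T_gf)(0)=0$ and $(T_gf)'=fg'$, we have
$$\|T_gf\|_{A^p_\alpha} \approx \int_D |f(z)|^p |g'(z)|^p (1-|z|^2)^{\alpha+p}\, dA(z) = \int_D |f(z)|^p \bigl(|g'(z)|(1-|z|^2)\bigr)^p (1-|z|^2)^\alpha\, dA(z).$$
Boundedness of $T_g$ (assumed throughout, as in the rest of the section) forces $g\in\B$, by Theorem \ref{nec}(b) together with the norm of point evaluation in $A^p_\alpha$. So the weight $\phi(z)=|g'(z)|(1-|z|^2)$ is bounded. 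Bounded below then means
$$\int_D |f|^p \phi^p\, dA_\alpha \gtrsim \int_D |f|^p\, dA_\alpha$$
for all $f\in A^p_\alpha$. I will compare this with the condition that the sets $G_c=\{\phi>c\}$ occupy a fixed proportion of every Carleson box. This is the form of Luecking's theorem on dominating sets for Bergman spaces, which I expect is the content of Proposition \ref{luk}. That proposition says $\int_{G}|f|^p dA_\alpha \gtrsim \int_D |f|^p dA_\alpha$ for all $f$ if and only if $|G\cap S(I)|\geq\delta|I|^2$ for all arcs $I$. Equivalently one can use pseudo-hyperbolic discs of a fixed radius, and I will take whichever formulation Proposition \ref{luk} gives.

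\emph{Sufficiency.} Suppose the density condition holds for some $c,\delta$. Then
$$\int_D|f|^p\phi^p\,dA_\alpha \geq c^p\int_{G_c}|f|^p\,dA_\alpha \gtrsim c^p\|f\|^p_{A^p_\alpha}$$
by Proposition \ref{luk}, so $T_g$ is bounded below.

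\emph{Necessity.} Suppose $T_g$ is bounded below with constant $\varepsilon$. For any $c>0$, split
$$\int_D|f|^p\phi^p\,dA_\alpha \le \|\phi\|_\infty^p\int_{G_c}|f|^p\,dA_\alpha + c^p\int_{D\setminus G_c}|f|^p\,dA_\alpha.$$
Choose $c$ so small that $c^p\le \varepsilon/2$ in the normalized comparison. The last term can then be absorbed, giving $\int_{G_c}|f|^p\,dA_\alpha\gtrsim \|\phi\|_\infty^{-p}\|f\|^p$. Hence $G_c$ is dominating, and the necessity direction of Proposition \ref{luk} gives the box density condition.

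The main obstacle is Proposition \ref{luk} itself, especially its sufficiency half: density of $G$ in boxes implies domination. Here is the plan I would follow for it.
\begin{itemize}
\item Cover $D$ by a lattice of top halves of dyadic Carleson boxes $R_k$ with comparable hyperbolic size.
\item Density in boxes implies that a positive proportion of the $R_k$ (in $dA_\alpha$-weighted measure) meet $G$ in a set of relative measure at least $\eta$. This comes from a stopping-time or pigeonhole argument on the dyadic tree, using that the top halves inside $S(I)$ carry most of its measure.
\item On such an $R_k$, $\log|f|$ is subharmonic, and a Cartan/Remez-type inequality for analytic functions on a slightly larger disc gives $\int_{R_k\cap G}|f|^p \gtrsim \int_{R_k}|f|^p$. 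This holds only when $f$ is not too small on $R_k$ relative to its enlargement.
\item The \emph{bad} boxes, where the ratio fails, are controlled by a Carleson-measure argument: they contribute a small fraction of $\|f\|^p$.
\end{itemize}
The necessity half of Proposition \ref{luk} is easier. Test on normalized kernels $k_a(z)=(1-|a|^2)^{(2+\alpha)/p}/(1-\bar a z)^{2(2+\alpha)/p}$, whose mass concentrates near $S(I)$ with $a$ at its top, to show that $G$ cannot be sparse in any box.
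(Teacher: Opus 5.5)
Your argument is correct and is essentially the paper's proof. You reduce via \eqref{isom} to the weighted inequality $\int_D|f|^p\phi^p\,dA_\alpha\gtrsim\|f\|^p_{A^p_\alpha}$, with $\phi=|g'|(1-|z|^2)$ bounded because $g\in\B$, and then invoke Luecking's theorem. The only difference is that the paper applies Proposition \ref{luk} directly with $\tau=\phi^p$, since that proposition is already stated for bounded weights. You instead pass through the dominating-set form, using the truncation $\phi^p\ge c^p\chi_{G_c}$ for sufficiency and an absorption argument for necessity. That step, and your sketch of Luecking's theorem (which the paper only cites), are extra but harmless.
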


\begin{proof} We must assume $T_g$ is bounded on $A^p_{\alpha}$. By Theorem \ref{nec}, $g \in \B$. (That this is also sufficient for $T_g$ to be bounded on $A^p_0$ is in \cite{AC}.) $T_g$ is bounded below on $A^p_{\alpha}$ if and only if
$$ \|  T_gf  \|_{A^p_{\alpha}}^p \approx \int_D |f(z)|^p|g'(z)|^p (1-|z|^2)^{\alpha + p} \, dA(z) \gtrsim \|  f  \|^p_{A^p_{\alpha}}. $$
By Proposition \ref{luk}, this is true if and only if there exist $c > 0$ and $\delta > 0$ such that
$$ | \{z \in D : |g'(z)|^p(1-|z|^2)^{ p}  > c \} \cap S(I) | > \delta |I|^2 $$
for all arcs $I \subseteq \partial D$. If this holds for some $p$ it holds for all $p$.
\end{proof}

The proof of \cite[Proposition 5.4]{RaU} shows this result is nonvacuous. Ramey and Ullrich construct a Bloch function $g$ such that $|g'(z)|(1-|z|) > c_0$ if $1 - q^{-(k+1/2)} \leq |z| \leq 1 - q^{-(k+1)}$, for some $c_0 > 0$, $q$ some large positive integer, and $k = 1, 2, \dots$. Given a Carleson square $S(I)$, let $k_I$ be the least positive integer such that $q^{-k_I + 1/2} \leq |I|$. The annulus $E = \{z : 1 - q^{-(k_I+1/2)} \leq |z| \leq 1 - q^{-(k_I+1)}\}$ intersects $S(I)$, and 
$$ |E \cap S(I)| \approx |I|((1 - q^{-(k_I+1)}) - (1 - q^{-(k_I+1/2)})) = |I| \frac{q^{1/2}-1}{q^{k_I+1}} \geq \frac{q^{1/2}-1}{q^{3/2}} |I|^2. $$
Setting $c = c_0$ and $\delta \approx 1/q$ show Theorem \ref{tgap} holds for this example of $g$, and $T_g$ is bounded below on $A^p_\alpha$.

We define $H^p_0 = H^p/\C = \{f \in H^p : f(0) = 0\}$. The operator $S_g$ can clearly be bounded below, since $g(z) = 1$ gives the identity operator. A result due to Luecking (see \cite[3.34]{CM}) leads to a characterization of functions for which $S_g$ is bounded below on $H^2_0$ and $A^p_\alpha/\C$. We state a reformulation useful to our purposes here.

\begin{prop} \emph{(Luecking)}
\label{luk}
Let $\tau$ be a bounded, nonnegative, measurable function on $D$. Let $G_c = \{ z \in D : \tau(z) > c \}$, $1 \leq p < \infty$, and $\alpha > -1$. There exists $C > 0$ such that the inequality 
$$ \int_D |f(z)|^p\tau(z) (1-|z|)^{\alpha}\, dA(z) \geq C \int_D |f(z)|^p (1-|z|)^{\alpha} \, dA(z) $$
holds if and only if there exist $\delta > 0$ and $c > 0$ such that $|G_c \cap S(I)| \geq \delta |I|^2$ for every interval $I \subset \mathbb{T}$.
\end{prop}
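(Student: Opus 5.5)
We treat the two directions separately; both rest on the subharmonicity of $|f|^p$ and on the hyperbolic-disk geometry underlying Carleson squares. Write $d\mu_\alpha = (1-|z|)^\alpha dA$. Since $\tau \le \|\tau\|_\infty$, only the lower inequality has content.

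\textbf{Necessity.} Assume the inequality holds with constant $C$. Fix an arc $I$ with center $\zeta$, and let $a = (1-|I|)\zeta$. Test the inequality on $f_a(z) = (1-|a|^2)^{s}/(1-\bar a z)^{s'}$ with $s'$ large. These functions are normalized reproducing-kernel type functions, so $|f_a|^p d\mu_\alpha$ has most of its mass (a fixed fraction $1-\eta$, with $\eta$ small once $s'$ is large) in a subset of $S(I)$ of hyperbolic size comparable to $S(I)$. Outside $S(I)$ it decays, with tail mass at most $\eta$ times the total.

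Split $\int |f_a|^p \tau\, d\mu_\alpha$ over three regions: $G_c\cap S(I)$, $S(I)\setminus G_c$, and $D\setminus S(I)$.
\begin{itemize}
\item On $S(I)\setminus G_c$ we have $\tau \le c$, so this piece is at most $c\|f_a\|^p$.
\item On $D\setminus S(I)$ we have $\tau\le \|\tau\|_\infty$, so this piece is at most $\|\tau\|_\infty\,\eta\,\|f_a\|^p$.
\item On $G_c\cap S(I)$ we have $|f_a|^p(1-|z|)^\alpha \lesssim \|f_a\|^p/|I|^2$, so this piece is at most $\|\tau\|_\infty\,|G_c\cap S(I)|\,\|f_a\|^p/|I|^2$.
\end{itemize}
Choose $c = C/4$ and $\eta$ with $\|\tau\|_\infty\eta < C/4$. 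The inequality then forces $|G_c\cap S(I)| \ge (C/(2\|\tau\|_\infty))|I|^2$, which gives $\delta$.

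\textbf{Sufficiency.} This is the main step, and it follows Luecking's argument. Assume $|G_c\cap S(I)|\ge\delta|I|^2$ for every $I$.
\begin{itemize}
\item First convert the hypothesis on squares into one on hyperbolic disks. Decompose $D$ into top halves $Q_I$ of dyadic Carleson squares. Because the condition holds for all subsquares of $S(I)$, one shows that a fixed proportion of the area of a hyperbolic neighborhood $\tilde Q_I$ of $Q_I$ lies in $G_c$. This uses a stopping-time / pigeonhole argument: if $G_c\cap S(I)$ were concentrated deep in $S(I)$, a subsquare would violate the condition, so some enlarged top box of comparable size sees proportion $\gtrsim\delta$.
\item Next use the following lemma. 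For a hyperbolic disk $\Delta$ of fixed radius and a measurable $E\subset\Delta$ with $|E|\ge\gamma|\Delta|$, we have $\int_\Delta |f|^p \lesssim_\gamma |\Delta|\,\sup_{\Delta'}|f|^p\,\ldots$. More usefully, the standard Luecking lemma: for each $\gamma>0$ there is $K_\gamma$ with
$$\int_{\Delta}|f|^p\,dA \le K_\gamma \int_{E\cup \Delta^*}|f|^p\,dA$$
in the aggregated form. Concretely, $\int_D |f|^p d\mu_\alpha \lesssim \sum_I (1-|z_I|)^{\alpha}|I|^2\,\sup_{\Delta_I}|f|^p$. The sup is then controlled by an average of $|f|^p$ over $E_I=G_c\cap\Delta_I^*$ when $|E_I|\gtrsim|\Delta_I|$, via the fact that $\log|f|$ is subharmonic and a Cartan/Remez-type estimate.
\item The cleanest route I would try first avoids the sup by exploiting that the hypothesis holds at every scale. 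Partition $D$ into "good" boxes, where $|G_c\cap \tilde Q_I|\ge\gamma|Q_I|$, and "bad" boxes. Subharmonicity shows that boxes where $|f|$ is large relative to the neighborhood average dominate the integral, and on those the Remez-type inequality $\sup_\Delta|f|^p\lesssim \frac{1}{|E|}\int_E|f|^p$ (valid for analytic $f$ on a slightly larger disk, up to a controlled loss) applies.
\end{itemize}
Summing over boxes with bounded overlap then gives $\int |f|^p d\mu_\alpha \lesssim c^{-1}\int |f|^p\tau\, d\mu_\alpha$.

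I expect this last point — passing from "$G_c$ has positive density in each Carleson square" to a reverse inequality for $|f|^p$ despite $f$ possibly vanishing on parts of $G_c$ — to be the real obstacle. I would handle it exactly as in Luecking's proof (as cited via \cite[3.34]{CM}). That is, first prove the equivalence between the Carleson-square density condition and the hyperbolic-disk density condition, then apply Luecking's theorem in its hyperbolic-disk form. The reformulation here is just this translation together with $\tau>c$ on $G_c$.
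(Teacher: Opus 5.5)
The paper gives no proof of this proposition; it quotes Luecking (via \cite[3.34]{CM}). Your sufficiency plan ends in the same place: convert the square condition into a pseudohyperbolic-disk density condition, apply Luecking's theorem to the set $G_c$, and use $\tau>c$ there. That direction therefore agrees with the paper.

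The genuine gap is in your own necessity argument when $-1<\alpha<0$. Your third bullet, $|f_a|^p(1-|z|)^\alpha\lesssim\|f_a\|^p/|I|^2$ on $S(I)$, holds for $\alpha\ge0$, because $(1-|z|)^\alpha\le|I|^\alpha$ there. It is false for $\alpha<0$:
\begin{itemize}
\item $|f_a|$ is largest at the boundary point $\zeta$. In fact, for large $s'$ the mass of $|f_a|^p\,d\mu_\alpha$ migrates toward $\zeta$, rather than sitting in a bounded hyperbolic neighborhood of $a$.
\item Meanwhile $(1-|z|)^\alpha\to\infty$ as $|z|\to1$.
\end{itemize}
So if $G_c\cap S(I)$ is a thin strip along the arc, the bullet gives no control.

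Replace the pointwise bound by integrability. First, $\sup_D|f_a|^p\approx\|f_a\|^p(1-|a|)^{-2-\alpha}$, with constants depending on $s'$ and $\alpha$. Second, for $E\subset S(I)$ a bathtub argument gives $\int_E(1-|z|)^\alpha\,dA\lesssim_\alpha|I|^{2+\alpha}(|E|/|I|^2)^{1+\alpha}$. Together these force $|G_c\cap S(I)|\gtrsim(C/\|\tau\|_\infty)^{1/(1+\alpha)}|I|^2$.

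You should also restrict to small arcs, say $|I|\le1/2$. The tail estimate outside $S(I)$ is not uniform as $|I|$ grows: for $|I|$ near $1$ the point $a$ is near $0$. Larger arcs then follow from subarcs.

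Alternatively, skip test functions entirely. Since $\tau\le c+\|\tau\|_\infty\chi_{G_c}$, taking $c=C/2$ gives $\int_{G_c}|f|^p\,d\mu_\alpha\ge\frac{C}{2\|\tau\|_\infty}\int_D|f|^p\,d\mu_\alpha$. The necessity half of Luecking's theorem for sets then finishes.

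Two statements in your sufficiency sketch are also wrong as written, though harmless since you end by citing Luecking.
\begin{itemize}
\item The square-to-disk translation needs no stopping time or subsquares. The strip $\{1-|z|<\epsilon|I|\}\cap S(I)$ has area $\lesssim\epsilon|I|^2$. So for $\epsilon\ll\delta$, $G_c$ fills a proportion $\gtrsim\delta$ of $S_\epsilon(I)$, which lies in a pseudohyperbolic disk whose radius depends only on $\epsilon$, hence on $\delta$.
\item The claim that $\sup_{\Delta_I}|f|^p$ is controlled by the average of $|f|^p$ over $E_I$ once $|E_I|\gtrsim|\Delta_I|$ fails for general analytic $f$. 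Take $f(z)=e^{Nz}$, let $E$ be the left half of a disk centered at $0$ (where $|f|\le1$), and let $N\to\infty$.
\end{itemize}
Luecking applies the Remez/Cartan-type estimate only on disks where $\sup_{\Delta^*}|f|^p\le M|\Delta|^{-1}\int_\Delta|f|^p$. Before that, he uses subharmonicity and bounded overlap to show the remaining disks carry at most $O(1/M)$ of $\|f\|^p$. Your third bullet gestures at this, but the partition that matters is by the growth of $f$, not by the density of $G_c$, which the hypothesis already makes uniform. That selection is the idea you would need to supply if you proved this direction instead of citing it.
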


The proof is omitted. Using the Littlewood-Paley identity we get the following:
\begin{cry}
\label{lukcry}
 $S_g$ is bounded below on $H^2_0$ if and only if there exist $c > 0$ and $\delta > 0$ such that $|G_c \cap S(I)| \geq \delta |I|^2$, where $G_c = \{z \in D: |g(z)| > c \}$.
\end{cry}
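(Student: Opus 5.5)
The plan is to reduce Corollary \ref{lukcry} to Proposition \ref{luk} with $p=2$, using the Littlewood--Paley identity as the bridge. For $f \in H^2_0$ we have $f(0)=0$, and $(S_gf)(0)=0$ with $(S_gf)' = f'g$. Applying (\ref{isom}) in the case $p=2$, $\alpha=-1$ (that is, $A^2_{-1}=H^2$) to both $f$ and $S_gf$ gives
$$\|f\|_{H^2}^2 \approx \int_D |f'(z)|^2(1-|z|^2)\,dA(z), \qquad \|S_gf\|_{H^2}^2 \approx \int_D |f'(z)|^2|g(z)|^2(1-|z|^2)\,dA(z).$$
Since $1-|z|^2 \approx 1-|z|$, the statement that $S_g$ is bounded below on $H^2_0$ becomes a reverse Carleson-type inequality for $h=f'$:
$$\int_D |h(z)|^2 |g(z)|^2 (1-|z|)\,dA(z) \gtrsim \int_D |h(z)|^2 (1-|z|)\,dA(z).$$

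Next we must ask which functions $h$ arise as $f'$. As $f$ ranges over $H^2_0$, $f'$ ranges over exactly $A^2_1$; this follows from (\ref{isom}), since differentiation is an isomorphism of $H^2_0$ onto $A^2_1$ with inverse the indefinite integral vanishing at $0$. So the condition is precisely the inequality of Proposition \ref{luk} with $p=2$, $\alpha=1$, $\tau=|g|^2$, required for all $h\in A^2_1$.

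To apply Proposition \ref{luk} we need $\tau$ to be bounded, nonnegative and measurable. Throughout, $S_g$ is assumed bounded on $H^2$ (as elsewhere in the paper). Corollary \ref{cor_bdd} then gives $g\in H^\infty$, since point evaluation of the derivative is bounded on $H^2$. Hence $\tau=|g|^2$ is bounded, and Proposition \ref{luk} yields the equivalence with $|\{\tau>c'\}\cap S(I)|\ge\delta|I|^2$ for all arcs $I$. Because $\{|g|^2>c'\}=\{|g|>\sqrt{c'}\}=G_{\sqrt{c'}}$, renaming $c=\sqrt{c'}$ gives exactly the stated condition.

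The only delicate point is identifying the range of differentiation, namely that every $h\in A^2_1$ is $f'$ for some $f\in H^2_0$, with comparable norms. This is needed so that the inequality is tested over the full class required in Proposition \ref{luk}. It is provided by (\ref{isom}) at $\alpha=-1$, which the paper notes holds there. The remaining steps, including the comparison of the weights $1-|z|^2$ and $1-|z|$, are routine.
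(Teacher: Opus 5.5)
Your proposal is correct and follows the paper's route. The paper's entire justification is ``using the Littlewood--Paley identity'' together with Proposition \ref{luk} for $p=2$, $\alpha=1$, $\tau=|g|^2$, applied to $h=f'\in A^2_1$. You carry this out and supply the details the paper leaves implicit: that differentiation maps $H^2_0$ onto $A^2_1$, that $\tau$ is bounded via Corollary \ref{cor_bdd}, and the renaming $c=\sqrt{c'}$.
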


We use Corollary \ref{lukcry} to construct a nonexample of boundedness below of $S_g$ on $H^2_0$, and compare $M_g$ on $H^2$ to $S_g$ on $H^2_0$. If $g(z)$ is the singular inner function $\exp( \frac{z+1}{z-1})$, $S_g$ is not bounded below on $H^2_0$. To see this, fix $c \in (0,1)$. $G_c$ is the complement in $D$ of a horodisk, a disk tangent to the unit circle, with radius $r = \frac{\log c + 1}{2(\log c - 1)}$ and center $1-r$. Choosing a sequence of intervals $I_n \subset \mathbb{T}$ such that 1 is the center of $I_n$ and $|I_n| \to 0$ as $n \to \infty$, we see
$$ \frac{|G_c \cap S(I_n)|}{|I_n|^2} \to 0 \text{ as } n \to \infty, $$
meaning $S_g$ is not bounded below on $H^2_0$.

$M_g$ is bounded below on $H^2$ if and only if the radial limit function of $g \in H^\infty$ is essentially bounded away from $0$ on $\partial D$. (\cite{KP} has this result as a special case of weighted composition operators.) Theorem \ref{mg_sg} will show this is weaker than the condition for $S_g$ to be bounded below on $H^2_0$. The example above of a singular inner function then shows it is strictly weaker. To prove Theorem \ref{mg_sg} we use a lemma which allows us to estimate an analytic function inside the disk by its values on the boundary. Define the conelike region with aperture $\alpha \in (0, 1)$ at $e^{i\theta}$ to be
$$ \Gamma_\alpha(e^{i\theta}) = \left\{ z \in D : \frac{|e^{i\theta} - z|}{1 - |z|} < \alpha \right\}. $$
For a function $g \in H(D)$, define the nontangential limit function, for almost all $e^{it}$,
$$ |g^*(e^{it})| = \lim_{\Gamma_\alpha(e^{it}) \ni z \to e^{it}} |g(z)|. $$
For any arc $I \subseteq \partial D$ and $0 < r < 2\pi/|I|$, $rI$ will denote the arc with the same center as $I$ and length $r|I|$. We define the upper Carleson rectangle $S_\varepsilon(I) = \{re^{it} : 1 - |I| < r < (1 - \varepsilon|I|), e^{it} \in I\}$, and $S_+(I) = S_{1/2}(I)$.

\begin{lemma} 
\label{star}
Given $(1 >)\varepsilon > 0$ and a point $e^{i\theta}$ such that $|g^*(e^{i\theta})| < \varepsilon$, there exists an arc $I \subset \partial D$ such that $|g(z)| < \varepsilon$ for $z \in S_{\varepsilon}(I)$.
\end{lemma}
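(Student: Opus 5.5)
The plan is to use the nontangential limit at $e^{i\theta}$ together with the geometry of the upper Carleson rectangle. A small arc $I$ centered at $e^{i\theta}$ has $S_\varepsilon(I)$ inside a nontangential approach region at $e^{i\theta}$, and close to $e^{i\theta}$. We are told the limit value $|g^*(e^{i\theta})|$ is strictly less than $\varepsilon$.

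Set $\eta = (\varepsilon - |g^*(e^{i\theta})|)/2 > 0$. By definition of the nontangential limit, for any fixed aperture of the approach region there is $\rho > 0$ such that $\bigl||g(z)| - |g^*(e^{i\theta})|\bigr| < \eta$ for every $z$ in the approach region with $|z - e^{i\theta}| < \rho$. All such $z$ then satisfy $|g(z)| < \varepsilon$.

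The aperture has to be chosen with care. As literally written, $\Gamma_\alpha$ with $\alpha\in(0,1)$ is empty, since $|e^{i\theta}-z| \ge 1-|z|$. So we read the definition in the standard way, as a Stolz angle $\{z : |e^{i\theta}-z| < M(1-|z|)\}$ with some $M > 1$. The existence of the nontangential limit is standard for all apertures (Fatou), so we may take $M$ as large as the geometry requires.

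Next we fix the shape. Let $I$ be the arc centered at $e^{i\theta}$ with $|I| = h$, small. For $z = re^{it} \in S_\varepsilon(I)$ we have $\varepsilon h < 1 - r < h$ and $|t - \theta| \le h/2$. Hence
$$|e^{i\theta} - z| \le (1-r) + |e^{it} - e^{i\theta}| \le h + h/2 = \tfrac{3}{2}h < \tfrac{3}{2\varepsilon}(1-r).$$
So $S_\varepsilon(I)$ lies in the Stolz region of aperture $M = 3/(2\varepsilon)$, independently of $h$. This uniform containment is the key point: the lower cutoff $1 - r > \varepsilon h$ is exactly what keeps the rectangle away from the boundary tangentially. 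Moreover, every point of $S_\varepsilon(I)$ is within distance $\tfrac32 h$ of $e^{i\theta}$.

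Finally, choose $h$ with $\tfrac32 h < \rho$, where $\rho$ corresponds to aperture $M = 3/(2\varepsilon)$ as above. Then every $z \in S_\varepsilon(I)$ is in the approach region and within $\rho$ of $e^{i\theta}$, so $|g(z)| < \varepsilon$.

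The main obstacle is only the bookkeeping about apertures. We need the nontangential limit along cones of arbitrary (large) aperture, here $M \approx 1/\varepsilon$, rather than one fixed cone. I would handle this by invoking the standard fact that for $g$ with nontangential limits (e.g. $g \in H^\infty$, as in the application to Theorem \ref{mg_sg}), the limit exists in every Stolz angle at almost every point. The point $e^{i\theta}$ is assumed to be such a point.
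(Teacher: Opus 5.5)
Your proof is correct and follows the paper's argument. Like the paper, you place $S_\varepsilon(I)$ uniformly inside a nontangential approach region at $e^{i\theta}$, use the nontangential limit to get a radius $\delta$, and then shrink $I$. Your explicit aperture $M = 3/(2\varepsilon)$, and your observation that the approach region needs aperture greater than $1$ (with $\alpha<1$ as written, $\Gamma_\alpha$ is empty), simply make precise what the paper leaves implicit.
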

\begin{proof}
We can choose $\alpha$ close enough to $1$ so that $S_\varepsilon(I) \subset \Gamma_\alpha(e^{i\theta})$ for all $I$ centered at $e^{i\theta}$ with, say, $|I| < 1/4$. If $|g^*(e^{i\theta})| < \varepsilon$, there exists $\delta > 0$ such that $z \in \Gamma_\alpha(e^{i\theta}), |z - e^{i\theta}| < \delta$ imply $|g(z)| < \varepsilon$. Choosing $I$ such that $S(I)$ is contained in a $\delta$-neighborhood of $e^{i\theta}$ finishes the proof.
\end{proof}

\begin{thm}
\label{mg_sg}
If $S_g$ is bounded below on $H^2_0$, then $M_g$ is bounded below on $H^2$.
\end{thm}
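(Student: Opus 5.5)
We argue by contrapositive, combining Corollary \ref{lukcry}, Lemma \ref{star}, and the criterion quoted above: $M_g$ is bounded below on $H^2$ if and only if $g \in H^\infty$ and $|g^*|$ is essentially bounded away from $0$ on $\partial D$. Assume $S_g$ is bounded below on $H^2_0$. Since $S_g$ is bounded, Corollary \ref{cor_bdd} gives $g \in H^\infty$. Hence $M_g$ is bounded, and only the lower bound on $|g^*|$ remains to be checked.

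By Corollary \ref{lukcry} there exist $c>0$ and $\delta>0$ such that $|G_c \cap S(I)| \geq \delta |I|^2$ for every arc $I$, where $G_c = \{|g|>c\}$. Shrinking $c$ only enlarges $G_c$, so we may assume $c<1$. We aim to show that $|g^*| \geq \varepsilon$ a.e. on $\partial D$ for a suitable $\varepsilon>0$ depending only on $\delta$ and $c$.

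Suppose instead that $|g^*(e^{i\theta})| < \varepsilon$ at some point where the nontangential limit exists. Lemma \ref{star} produces an arc $I$ with $|g|<\varepsilon$ on $S_\varepsilon(I)$. If $\varepsilon \leq c$, then $S_\varepsilon(I)$ is disjoint from $G_c$, so $G_c \cap S(I)$ lies in $S(I)\setminus S_\varepsilon(I)$. This set is the thin layer $\{re^{it} : 1-\varepsilon|I| \leq r <1,\ e^{it}\in I\}$, whose area is at most $\varepsilon|I|^2$. Thus $\delta|I|^2 \leq |G_c\cap S(I)| \leq \varepsilon |I|^2$. Choosing $\varepsilon < \min(c,\delta)$ gives a contradiction, so $|g^*|\geq \varepsilon$ at every point where the nontangential limit exists, hence a.e. The criterion from \cite{KP} then shows $M_g$ is bounded below.

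The main obstacle is the bookkeeping in Lemma \ref{star}: the same $\varepsilon$ serves both as the size bound on $|g|$ and as the thickness of the excluded boundary layer. We must confirm that the lemma may be applied with a small $\varepsilon$ chosen in advance, which it allows, since $\varepsilon$ is arbitrary there. We must also confirm that the area estimate $|S(I)\setminus S_\varepsilon(I)| \lesssim \varepsilon |I|^2$ holds with a constant independent of $I$ for short arcs. If that constant is some absolute $C$, we simply take $\varepsilon < \min(c, \delta/C)$.
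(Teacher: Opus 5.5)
Your argument is correct and is essentially the paper's proof: Corollary \ref{lukcry} combined with Lemma \ref{star} forces $|g^*|$ to be essentially bounded away from $0$, and the criterion from \cite{KP} then gives that $M_g$ is bounded below on $H^2$. You fix $c,\delta$ first and take $\varepsilon<\min(c,\delta)$, which makes the quantifiers explicit, and you use the layer estimate $|S(I)\setminus S_\varepsilon(I)|\le\varepsilon|I|^2$, which holds with constant $1$ (the paper writes $\varepsilon|I|$, apparently a slip); otherwise the route is the same.
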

\begin{proof}
Assume $M_g$ is not bounded below on $H^2$. Let $\varepsilon > 0$. The radial limit function of $g$ equals $g^*$ almost everywhere, so there exists a point $e^{i\theta}$ such that $|g^*(e^{i\theta})| < \varepsilon$. By Lemma \ref{star}, there exists $S(I)$ such that $|\{z : |g(z)| \geq \varepsilon \} \cap S(I)| \leq \varepsilon|I|$. Since $\varepsilon$ was arbitrary, this violates the condition in Proposition \ref{luk}.
\end{proof}

We now characterize the symbols $g$ which make $S_g$ bounded below on the Bloch space. It turns out to be a common condition appearing in a few different forms in the literature. The condition appears in characterizing $M_g$ on $A^2_0$ in McDonald and Sundberg \cite{McSu}. Our main result is equivalence of (i)-(iii) in Theorem \ref{main}, and we give references with brief explanations for (iv)-(vi).

\begin{thm} 
\label{main}
The following are equivalent for $g \in H^{\infty}$:
\begin{itemize}
\item[(i)] $g = BF$ for a finite product $B$ of interpolating Blaschke products and $F$ such that $F, 1/F \in H^{\infty}$.
\item[(ii)] $S_g$ is bounded below on $\B/\C$.
\item[(iii)] There exist $r < 1$ and $\eta > 0$ such that for all $a \in D$, 
$$ \sup_{z \in D(a,r)} |g(z)| > \eta.$$
\item[(iv)] $S_g$ is bounded below on $H^2_0$.
\item[(v)] $M_g$ is bounded below on $A^p_{\alpha}$ for $\alpha > -1$.
\item[(vi)] $S_g$ is bounded below on $A^p_\alpha/\C$ for $\alpha > -1$.
\end{itemize}
\end{thm}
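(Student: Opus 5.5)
The core of the proof is the equivalence (i)$\Leftrightarrow$(ii)$\Leftrightarrow$(iii). I will prove (ii)$\Leftrightarrow$(iii) directly, and then quote the literature for (i)$\Leftrightarrow$(iii). Here $D(a,r)$ is the pseudohyperbolic disk. The remaining items (iv)--(vi) will be reduced to (iii) using Proposition \ref{luk} and Corollary \ref{lukcry}.

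\emph{(iii)$\Rightarrow$(ii).} We have $\|S_gf\|_\B = \sup_z |f'(z)||g(z)|(1-|z|^2)$, so it suffices to show
\[
\sup_z |f'(z)||g(z)|(1-|z|^2) \gtrsim \sup_z |f'(z)|(1-|z|^2).
\]
Normalize $\|f\|_\B = 1$ and pick $a$ with $|f'(a)|(1-|a|^2) \ge 1/2$. Consider $h = (f'\circ\varphi_a)\cdot(\varphi_a')\cdot(1-|a|^2)$, which up to normalization is the derivative of $f\circ\varphi_a - f(a)$. By Möbius invariance of the Bloch seminorm, the family of such $h$ (satisfying $|h(w)|(1-|w|^2)\le 1$ and $|h(0)|\ge 1/2$) is a normal family. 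The aim is then a uniform statement: there exists $\rho<1$, depending on $r$ and $\eta$, such that for every such $h$ and every $G\in H^\infty$ with $\|G\|_\infty\le \|g\|_\infty$ and $\sup_{D(0,r)}|G|>\eta$, we have $\sup_{D(0,\rho)} |h||G|(1-|w|^2) \ge c>0$.

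I will prove this by contradiction with normal families. A failing sequence has a limit pair $(h,G)$ with $h(0)\ne 0$ and $G\not\equiv 0$, since $\sup_{\overline{D(0,r)}}|G|\ge\eta$. Then $hG\not\equiv 0$, contradicting the fact that the suprema tend to zero. Applying this with $G=g\circ\varphi_a$ and transferring back gives a point $z\in D(a,\rho)$ where $|f'(z)||g(z)|(1-|z|^2)\ge c$.

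\emph{(ii)$\Rightarrow$(iii).} Suppose (iii) fails. Then there are $a_n$ with $\sup_{D(a_n,r_n)}|g|\to 0$ and $r_n\to 1$. Take test functions $f_n = \log\frac{1}{1-\overline{a_n}z}$, or $f_n=\varphi_{a_n}$-type functions. These have Bloch seminorm $\approx 1$, and $|f_n'(z)|(1-|z|^2)$ is small off $D(a_n,r)$ for $r$ near $1$, since it is comparable to $(1-|\varphi_{a_n}(z)|^2)^{s}$ for a suitable choice such as $f_n' = (1-|a_n|^2)/(1-\overline{a_n}z)^2$. On $D(a_n,r)$ the factor $|g|$ is small; off it, $|f_n'|(1-|z|^2)\lesssim 1-r^2$ and $|g|\le\|g\|_\infty$. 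Letting $r\to 1$ slowly along with $n$ gives $\|S_gf_n\|_\B\to 0$, which contradicts (ii) by Theorem \ref{closed_range}.

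For the remaining items, (i)$\Leftrightarrow$(iii) is a known theorem for which I will cite McDonald--Sundberg \cite{McSu} (and related work): (iii) forces the zero set of $g$ to be a finite union of interpolating sequences, and dividing out gives an invertible $F$. Next, (iii)$\Leftrightarrow$(iv) and (iii)$\Leftrightarrow$(vi) follow from Corollary \ref{lukcry} and Proposition \ref{luk} applied with $\tau=|g|^p$ and the identity (\ref{isom}). The point to check is that the condition $|\{|g|>c\}\cap S(I)|\ge\delta|I|^2$ is equivalent to (iii), and I will argue both directions. If (iii) holds, then for a top half-box $S_+(I)$ containing some $D(a,r)$, a point with $|g|>\eta$ together with the Bloch/Lipschitz estimate of the bounded function $g$ in the hyperbolic metric yields a hyperbolic disk of fixed radius where $|g|>\eta/2$, which has area $\approx|I|^2$. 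Conversely, if (iii) fails on $D(a_n,r_n)$, the Carleson box of size $1-|a_n|$ is mostly covered by $D(a_n,r_n)$, so the measure condition fails. Finally, (v) is the same Luecking criterion applied directly to $\int|f|^p|g|^p(1-|z|)^\alpha$.

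The main obstacle I expect is the uniform normal-families step in (iii)$\Rightarrow$(ii). In particular, I need the point produced to lie in a fixed hyperbolic disk, so that $(1-|z|^2)$ and $|\varphi_a'|$ are comparable there. That localization is what makes the compactness argument work.
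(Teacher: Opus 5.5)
Your proof is correct in outline, but it is organized around (iii) rather than (i), which makes it a genuinely different route.

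\textbf{How the paper proceeds.} It proves (i)$\Rightarrow$(ii) from the factorization, using $S_{g_1g_2}=S_{g_1}S_{g_2}$ and a case analysis for a single interpolating Blaschke product. It proves (iii)$\Rightarrow$(i) by hand, and it gets (iv)--(vi) from (i) via Bourdon. Only (ii)$\Rightarrow$(iii) matches yours. There you should drop the logarithmic test function: it has $(1-|z|^2)|f_n'(z)|=|a_n|$ at $z=0$, so it does not concentrate on $D(a_n,r)$.

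\textbf{Your (iii)$\Rightarrow$(ii).} The normal-families argument is valid and needs no Blaschke-product theory. Take $h=(f\circ\varphi_a)'$, without the extra factor $1-|a|^2$. The identity $|\varphi_a'(w)|(1-|w|^2)=1-|\varphi_a(w)|^2$ then makes the transfer back to $D(a,\cdot)$ exact, so your comparability worry is moot. Any fixed radius in $(0,1)$ works, since a limit pair with $hG\equiv 0$ on a disk is impossible. The price is a non-explicit constant, whereas the paper's bound depends visibly on the interpolation constants.

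\textbf{Your route to (iv)--(vi).} Relating (iii) directly to the Luecking condition is more self-contained than citing Bourdon. However, use the full box $S(I)$ with $1-|a|$ a small multiple of $(1-r)|I|$. For $r$ near $1$, $1-|z|$ varies over $D(a,r)$ by a factor of at least $(1+r)/(1-r)$, so $D(a,r)$ never fits in $S_+(I)$.

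\textbf{Where you do less than the paper.} You only cite (i)$\Leftrightarrow$(iii). Your sketch of (iii)$\Rightarrow$(i) skips exactly what the paper works for: excluding a singular inner factor and showing the outer factor is invertible. Both follow at once from Harnack's inequality.
\begin{itemize}
\item $F=g/B$ is zero-free with $\|F\|_\infty=\|g\|_\infty$ and $|F|\ge|g|$.
\item Apply Harnack's inequality to the nonnegative harmonic function $\log(\|g\|_\infty/|F|)$ on $D(a,r)$. This gives $|F(a)|\ge\|g\|_\infty(\eta/\|g\|_\infty)^{(1+r)/(1-r)}$ uniformly in $a$, so $1/F\in H^\infty$.
\item Since $|B|\ge|g|/\|g\|_\infty$, the paper's Carleson estimate for the zeros of $B$ applies, and McDonald--Sundberg's Lemma 21 finishes.
\end{itemize}
For (i)$\Rightarrow$(iii), you can simply keep the paper's (i)$\Rightarrow$(ii) and compose it with your (ii)$\Rightarrow$(iii).
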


\begin{proof}
(i) $\Rightarrow$ (ii): Note that $S_{g_1g_2} = S_{g_1}S_{g_2}$ for any $g_1, g_2$. It follows that if $S_{g_1}$ and $S_{g_2}$ are bounded below then $S_{g_1g_2}$ is also bounded below. We will show that $S_F$ and $S_B$ are bounded below, implying the result for $S_g$.

It is necessary that $g \in H^\infty$ for $S_g$ to be bounded on $\B$. (Corollary \ref{cor_bdd}) If $F, 1/F \in H^{\infty}$, then 
$$ \| S_Ff \| = \sup_{z \in D} |F(z)||f'(z)|(1-|z|^2) \geq (1/\|1/F\|_\infty) \|f\|_\B. $$
Hence $S_F$ is bounded below.

By virtue of the fact beginning this proof, we may assume $B$ is a single interpolating Blaschke product without loss of generality. Let $\{w_n\}$ be the zero sequence of $B$, so
$$ B(z) = e^{i\phi} \prod_n \frac{w_n - z}{1 - \bar{w}_n z}. $$
Denote the pseudohyperbolic metric 
$$ \rho(z, w) = \frac{|w - z|}{|1 - \bar{w} z|}, \text{ for any } z, w \in D. $$
For the pseudohyperbolic disk of radius $d > 0$ and center $w \in D$, we use the notation
$$ D(w, d) = \{ z \in D : \rho(z, w) < d \}. $$
Let $B_j$ be $B$ without its $j$th zero, i.e., $B_j(z) = \frac{1 - \bar{w}_j z}{w_j - z} B(z)$. Since $B$ is interpolating, there exist $\delta > 0$ and $r > 0$ such that, for all $j$, $|B_j(z)| > \delta$ whenever $z \in D(w_j, r)$. In particular, the sequence $\{w_n\}$ is separated, so shrinking $r$ if necessary, we may assume
$$ \inf_{j \neq k} \rho(w_k, w_j) > 2r. $$
We compare $\|f\|$ to $\|S_Bf\| = \sup_{z \in D} |B(z)||f'(z)|(1-|z|^2)$. Let $a \in D$ be a point where the supremum defining the norm of $f$ is almost achieved, say, $|f'(a)|(1-|a|^2) > \|f\|/2$.

Consider the pseudohyperbolic disk $D(a, r)$. Inside $D(a, r)$ there may be at most one zero of $B$, say $w_k$. We examine three cases depending on the location and existence of $w_k$.

If $r/2 \leq \rho(w_k, a) < r$, then
$$ |B(a)| = \frac{|w_k - a|}{|1 - \bar{w}_k a|} |B_k(a)| > (r/2)\delta. $$
Thus we would have 
$$ \|S_Bf\| \geq |B(a)||f'(a)|(1-|a|^2) > (r/2)\delta \|f\|/2, $$
and $S_g$ would be bounded below.

On the other hand, suppose $\rho(w_k, a) < r/2$. Consider the disk $D(w_k, r/2)$, which is contained in $D(a, r)$. The expression $1-|z|^2$ is roughly constant on a pseudohyperbolic disk, i.e.,
$$ \sup_{z \in D(a,r)} (1-|z|^2) > C_r(1-|a|^2) \text{ for some } C_r > 0. $$
$C_r$ does not depend on $a$, and is near $1$ for small $r$. By the maximum principle for $f'$, there exists a point $z_a \in \partial D(w_k, r/2)$ where
$$ |f'(z_a)|(1-|z_a|^2) > |f'(a)|C_r(1-|a|^2) > C_r \|f\|/2. $$
(Since $\rho(w_k, a) < r/2$ and $\rho(z_a, w_k) = r/2$, we have $\rho(z_a, a) < r$.) This shows that $S_g$ is bounded below, for
$$ \|S_Bf\| \geq |B(z_a)||f'(z_a)|(1-|z_a|^2) $$
$$ > \rho(w_k, z_a) |B_k(z_a)| C_r \|f\|/2 $$
$$ > (r/2) \delta C_r \|f\|/2. $$
Finally, suppose no such $w_k$ exists. Then the function $(\frac{a-z}{1-\bar{a}z})B(z)$ is also an interpolating Blaschke product, and the previous case applies with $w_k = a$.

(ii) $\Rightarrow$ (iii): Assume (iii) fails. Given $\varepsilon > 0$, choose $r$ near $1$ so that $1 - r^2 < \varepsilon$, and choose $a \in D$ such that $|g(z)| < \varepsilon$ for all $z \in D(a, r)$. Consider the test function $f_a(z) = (a - z)/(1 - \bar{a} z)$. By a well-known identity,
$$ (1-|z|^2)|f'_a(z)| = 1 - (\rho(a, z))^2. $$
Thus $f_a \in \B$ with $\|f_a\| = 1$ for all $a \in D$. (The seminorm is 1, but the true norm is between 1 and 2 for all $a$.) By supposition on $g$,
$$ \|S_g f_a\| = \sup_{z \in D} |g(z)||f'_a(z)|(1-|z|^2) $$
$$ = \max \left\{ \sup_{z \in D(a,r)} |g(z)||f'_a(z)|(1-|z|^2), \sup_{z \in D \setminus D(a,r)} |g(z)||f'_a(z)|(1-|z|^2) \right\} $$
$$ \leq \max \left\{ \sup_{z \in D(a,r)} |g(z)|\|f_a\|, \sup_{z \in D \setminus D(a,r)} |g(z)|(1-r^2) \right\} $$
$$ < \max \{ \varepsilon, \|g\|_\infty \varepsilon \} \leq \varepsilon(\|g\|_\infty + 1) $$
Since $\|f_a\| = 1$ and $\varepsilon$ was arbitrary, $S_g$ is not bounded below.

(iii) $\Rightarrow$ (i): Assuming (iii) holds, we first rule out the possibility that $g$ has a singular inner factor. We factor $g = B I_g O_g$ where $B$ is a Blaschke product, $I_g$ a singular inner function, and $O_g$ an outer function. Let $\nu$ be the measure on $\partial D$ determining $I_g$, so
$$ I_g(z) = \exp \left( - \int \frac{e^{i\theta} + z}{e^{i\theta} - z} d\nu(\theta) \right). $$
Let $\varepsilon > 0$. For any $\alpha > 1$ and for $\nu$-almost all $\theta$, there exists $\delta > 0$ such that
$$ z \in \Gamma_\alpha(e^{i\theta}), |z - e^{i\theta}| < \delta \text{ imply } |I_g(z)| < \varepsilon. \quad (3.1) $$
This is \cite[Theorem II.6.2]{Gar}. $\delta$ may depend on $\theta$ and $\alpha$, but for nontrivial $\nu$ there exists some $\theta$ where $(3.1)$ holds. Given $r < 1$, choose $\alpha < 1$ such that, for every $a$ near $e^{i\theta}$ on the ray from $0$ to $e^{i\theta}$, the pseudohyperbolic disk $D(a, r)$ is contained in $\Gamma_\alpha(e^{i\theta})$. The disk $D(a, r)$ is a euclidean disk whose euclidean radius is comparable to $1 - |a|$. For $a$ close enough to $e^{i\theta}$,
$$ z \in D(a, r) \text{ implies } |z - e^{i\theta}| < \delta. $$
Hence $\sup_{z \in D(a, r)} |g(z)| < \varepsilon \|g\|$. This violates (iii), so $\nu$ must be trivial, and $I_g \equiv 1$.

A similar argument handles the outer function $O_g$. If for all $\varepsilon > 0$ there exists $e^{it}$ such that $|O^*_g(e^{it})| < \varepsilon$, we apply Lemma \ref{star}. The upper Carleson square in Lemma \ref{star} contains some pseudohyperbolic disk that violates (iii), so $O^*_g$ is essentially bounded away from $0$. There exists $\eta > 0$ such that $|O^*_g(e^{it})| \geq \eta$ almost everywhere. Note $1/O_g \in H^\infty$, since for all $z \in D$,
$$ \log|O_g(z)| = \frac{1}{2\pi} \int_0^{2\pi} \log|O^*_g(e^{it})| \frac{1-|z|^2}{|e^{it}-z|^2} dt \geq \log \eta. $$

We have reduced the symbol to a function $g = BF$, where $F, 1/F \in H^\infty$ and $B$ is a Blaschke product, say with zero sequence $\{w_n\}$. We will show that the measure $\mu_B = \sum (1 - |w_n|^2) \delta_{w_n}$ is a Carleson measure, implying $B$ is a finite product of interpolating Blaschke products. (see, e.g., \cite[Lemma 21]{McSu}) Let $r < 1$ and $\eta > 0$ be as in (iii), so $\sup_{z \in D(a,r)} |B(z)| > \eta$ for all $a$. Given any arc $I \subseteq \partial D$, we may choose $a_I$ and $z_I$ such that $D(a_I, r) \subseteq S(I)$, $z_I \in D(a_I, r)$, $|B(z_I)| > \eta$, and $(1 - |z_I|) \approx |I|$ as $I$ varies. $\mu_B(S(I)) = \sum (1 - |w_{n_k}|^2)$ where the subsequence $\{w_{n_k}\} = \{w_n\} \cap S(I)$. Assume without loss of generality that $|I| < 1/2$, so $|w_{n_k}| > 1/2$ for all $k$. This ensures $|1 - \bar{w}_{n_k} z_I| \approx |I|$. Thus we have
$$ \frac{1}{|I|} \sum_k (1 - |w_{n_k}|^2) \approx \sum_k \frac{(1-|z_I|^2)(1-|w_{n_k}|^2)}{|1 - \bar{w}_{n_k} z_I|^2} = \sum_k 1 - (\rho(z_I, w_{n_k}))^2 $$
$$ < 2 \sum_n 1 - \rho(z_I, w_n) \leq - \sum_n \log \rho(z_I, w_n) = - \log \prod_n \frac{|w_n - z_I|}{|1 - \bar{w}_n z_I|} = - \log |B(z_I)| \leq - \log \eta. $$
This shows $\mu_B$ is a Carleson measure.

(i) $\Leftrightarrow$ (iv) $\Leftrightarrow$ (v) $\Leftrightarrow$ (vi)
Bourdon shows in \cite[Theorem 2.3, Corollary 2.5]{B} that (i) is equivalent to the reverse Carleson condition in Corollary \ref{lukcry} above, hence (i) $\Leftrightarrow$ (iv). This reverse Carleson condition also characterizes boundedness below of $M_g$ on weighted Bergman spaces by Proposition \ref{luk}. Thus (iv) $\Leftrightarrow$ (v). A key connection is between $S_g$ and $M_g$ via the differentiation operator and equation (\ref{isom}), since $(S_gf)' = M_gf'$. The following diagram is commutative:
$$ \begin{CD}
A^p_\alpha/\C @>S_g>> A^p_\alpha/\C\\
@VVf \mapsto f'V @VVf \mapsto f'V\\
A^p_{\alpha+p} @>M_g>> A^p_{\alpha+p}
\end{CD} $$
This explains (v) $\Rightarrow$ (vi). Since $A^2_{-1} = H^2$, we can combine (iv) and (vi) to say $S_g$ is bounded below on $A^2_\alpha/\C$ for $\alpha \geq -1$.
\end{proof}

\subsection*{Concluding Remarks}
We suspect the results about $H^2$ can be extended to all $H^p$, $1 \leq p < \infty$, but without the Littlewood-Paley identity the proof is more difficult. Generalizing the results on Bloch to the $\alpha$-Bloch spaces can be done with adjusted test functions as in \cite{ZgCh}. Finally, we have partial results concerning $S_g$ being bounded below on BMOA, but have not completed proving a characterization like the one in Theorem \ref{main}.

I would like to thank my advisor Dr. Wayne Smith for his invaluable guidance.

\noindent Department of Mathematics, University of Hawaii, Honolulu, Hawaii 96822\\
E-mail address: austina@hawaii.edu

\end{document}